\documentclass[reqno]{amsart}
\usepackage{amsmath}
\usepackage{amssymb}
\usepackage{amsthm}
\usepackage{graphicx}
\usepackage{color}
\usepackage{ulem}
\usepackage{epstopdf}
\usepackage{here}

\newtheorem{theorem}{Theorem}[section]

\newtheorem{lemma}[theorem]{Lemma}
\theoremstyle{definition}
\newtheorem{definition}[theorem]{Definition}

\numberwithin{equation}{section}

\newcommand{\mincol}{{\rm mincol}^{\rm Dehn}}

\newcommand{\C}{\mathcal{C}}
\newcommand{\Z}{\mathbb{Z}}

\begin{document}

\title[]{The minimum number of Dehn $\Z$-colors of any nonsplittable $\Z$-colorable link is three}

\author[E.~Matsudo]{Eri Matsudo} 
\address{The Institute of Natural Sciences, Nihon University, 3-25-40 Sakurajosui, Setagaya-ku, Tokyo 156-8550, Japan}
\email{matsudo.eri@nihon-u.ac.jp}

\author[K.~Oshiro]{Kanako Oshiro}
\address{Department of Information and Communication Sciences, Sophia University, Tokyo 102-8554, Japan}
\email{oshirok@sophia.ac.jp}

\keywords{Links, Dehn $\mathbb Z$-colorings, Minimum numbers of colors}

\subjclass[2020]{57K10, 57K12}

\date{\today}

\begin{abstract}
Our previous papers \cite{MatsudoOshiroYamagishi-1, MatsudoOshiroYamagishi-2} are the first and second to discuss minimum numbers of “region” colors, while minimum numbers of arc colors such as Fox colors are well-studied.
As the third installment, in this paper, we investigate the minimum number of Dehn $\Z$-colors.
In particular, we show that the minimum number of Dehn $\Z$-colors of a nonsplittable $\mathbb Z$-colorable link is three.
\end{abstract}

\maketitle

\section{Introduction}
Our previous papers \cite{MatsudoOshiroYamagishi-1, MatsudoOshiroYamagishi-2} are the first and second to discuss minimum numbers of “region” colors for knots, while minimum numbers of arc colors such as Fox colors are well-studied, see \cite{AbchirElhamdadiLamsifer,BentoLopes,HanZhou,HararyKauffman, IchiharaMatsudo16,IchiharaMatsudo17,Matsudo,NakamuraNakanishiSatoh13,NakamuraNakanishiSatoh16,Oshiro10,Satoh09,ZhangJinDeng} for example.  
It is remarkable that the minimum number of Dehn $p$-colors exhibits a distinct behavior from that of Fox $p$-colors.
For example, for Fox $p$-colorings with odd primes $p \le 19$, the minimum number of colors is known to be constant regardless of knots.
In contrast, for Dehn $p$-colorings, this value can serve as an invariant that distinguishes  knots for certain primes, such as $p = 7$. 
Therefore, investigating the minimum number of Dehn colors without restricting to knots or to an odd prime $p$ would be a compelling direction for future research.

In this paper, we investigate the minimum number of colors for Dehn $\mathbb{Z}$-colorings of $\mathbb{Z}$-colorable links.

Next theorem is our main result:
\begin{theorem}\label{main_theorem}
The minimum number of Dehn $\Z$-colors of a nonsplittable $\mathbb Z$-colorable link is three.
\end{theorem}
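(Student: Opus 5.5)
The proof splits into a lower bound (no nontrivial Dehn $\Z$-coloring of any diagram of a nonsplittable $\Z$-colorable link uses only two colors) and an upper bound (every such link has some diagram with a nontrivial Dehn $\Z$-coloring using exactly three colors).

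\textbf{Lower bound.} I use the standard correspondence between Dehn and Fox colorings. Given a Dehn $\Z$-coloring, each arc receives as its Fox color the sum of the colors of the two regions on either side of it. This gives a Fox $\Z$-coloring, and the Dehn coloring is nontrivial exactly when this Fox coloring is nontrivial. One color is impossible, since it gives the trivial coloring.

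Suppose only two colors $a\neq b$ occur. Then every arc has color $2a$, $2b$ or $a+b$. I will show the following crossing rule. If the over-arc at a crossing has the same color on both sides, say $a$, then the Dehn condition forces all four regions around the crossing to be $a$. In particular both under-arcs are then colored $2a$.

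Let $L_1$ be the union of arcs colored $a+b$, that is, the arcs separating an $a$-region from a $b$-region. By the crossing rule, an arc colored $a+b$ can never pass under an arc whose two sides agree. I then do a case check at a crossing whose over-arc is colored $a+b$. The under colors must sum to $2a+2b$, and I expect the local configuration of the four regions to force either both under-arcs to be colored $a+b$, or the under strand to cross from the $a$-zone to the $b$-zone. It follows that $L_1$ is a union of whole components. Moreover, at every crossing between $L_1$ and the remaining components $L_2$, the strand of $L_1$ is the over-strand.

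Two consequences follow.

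\begin{itemize}
\item If $L_2=\emptyset$, every arc is colored $a+b$. The Fox coloring is then constant and the Dehn coloring is a trivial checkerboard one, a contradiction.
\item If $L_1=\emptyset$, only one region color occurs, again a contradiction.
\end{itemize}

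So both $L_1$ and $L_2$ are nonempty, and $L_1$ lies entirely over $L_2$. Lifting $L_1$ above the projection plane separates it from $L_2$ by a sphere, so the link is splittable, a contradiction. Hence at least three colors are needed.

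\textbf{Upper bound.} Start from any diagram carrying a nontrivial Dehn $\Z$-coloring, which exists because the link is $\Z$-colorable. First normalize by adding a constant and dividing by the gcd of the differences, so that the colors $0$ and $1$ both occur. Then remove the remaining colors one at a time by local moves that keep the coloring nontrivial, following the region-color reduction techniques of \cite{MatsudoOshiroYamagishi-1, MatsudoOshiroYamagishi-2}. The moves are Reidemeister II moves pushing a strand colored near the extremes across a region with an extreme color, and Reidemeister III or I moves to reroute. The aim is to reach a diagram whose regions are colored only from $\{0,1,2\}$, or perhaps $\{-1,0,1\}$.

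This reduction is the main obstacle. The strategy I would try first is induction on the maximum color $M$: take a region colored $M>2$, and use Reidemeister II moves to slide the adjacent over-strands so that each such region splits into pieces with strictly smaller colors. The difficulty is controlling the new colors these moves create and checking that nontriviality is preserved. By the lower bound, the final count is then exactly three.
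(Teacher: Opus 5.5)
Your lower bound is correct and is a different route from the paper's. The paper only notes that two region colors $0,a$ induce a nontrivial Fox coloring with colors in $\{0,a,2a\}$, and then cites the Ichihara--Matsudo theorem that nonsplittable links need at least four Fox $\Z$-colors. You instead prove the needed special case directly, by showing that the arcs of Fox color $a+b$ form a sublink lying over the rest. The case check you only ``expect'' does hold:
\begin{itemize}
\item an over-arc of color $2a$ (resp.\ $2b$) forces both under-arcs to be $2a$ (resp.\ $2b$);
\item an over-arc of color $a+b$ forces the under-arcs to be either both $a+b$, or $2a$ and $2b$.
\end{itemize}
So that half is complete and self-contained.

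The upper bound, however, is not proved. You leave the reduction as ``the main obstacle,'' and that reduction is the entire substance of this direction. Starting from an arbitrary nontrivial coloring gives you no control over colors. A Reidemeister move leaves the colors outside its support unchanged, so no $M$-colored region can simply be recolored; each must be swept out completely. The regions created in doing so receive colors such as $x_1-x_2+x_3$, which in general can exceed $M$ or drop below the minimum. (Nontriviality, by contrast, is automatic: Reidemeister moves carry nonconstant Fox colorings to nonconstant ones. Color control is the real issue.)

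The missing idea is to first pass to a $1$-simple coloring, in which every crossing is a $0$- or $1$-diff crossing. By Matsudo's theorem, every nonsplittable $\Z$-colorable link has a diagram with a $1$-simple Fox $\Z$-coloring. Converting it to a Dehn coloring with the unbounded region colored $0$ keeps it $1$-simple (Lemma~\ref{lem:1-simple}). Diagonally opposite regions then differ by at most $1$ at every crossing, so the neighborhoods of regions of maximal color $M$ fall into finitely many local types. The paper removes these by explicit moves, in order:
\begin{itemize}
\item crossings surrounded by four $M$'s;
\item crossings with two $M$'s, adjacent or diagonal;
\item $M$-colored $n$-gons, which are even-sided; these are cut into $4$-gons and $2$-gons, which are then removed.
\end{itemize}
In each case one checks that the coloring stays $1$-simple and, using $M\ge 3$, that every new color lies in $\{0,\dots,M-1\}$. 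Iterating until $M\le 2$ yields colors in $\{0,1,2\}$.

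Separately, your normalization is wrong as stated. Dividing by the gcd of the differences does not force both $0$ and $1$ to occur; for example, the colors $0,2,3$ already have gcd $1$.
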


Regarding Fox $\mathbb{Z}$-colorings, it is known from \cite{IchiharaMatsudo17, Matsudo, ZhangJinDeng} that the minimum number of colors for a nonsplittable $\mathbb{Z}$-colorable link is four. Consequently, we obtain an analogous result in which the required number of colors is reduced by one.

This paper is organized as follows:
Section 2 reviews Dehn $\mathbb{Z}$-colorings and their minimum numbers of colors.
In Section 3, by converting the notion of ``simple" from Fox colorings, we provide a definition of simple Dehn $\mathbb{Z}$-colorings and explore their properties.
Finally, Section 4 presents the proof of Theorem~\ref{main_theorem}.

\section{Dehn $\Z$-colorings and minimum numbers of colors}\label{sec:1}

Let $D$ be a diagram of a link $L$ with ${\rm Det}(L)=0$ and $\mathcal{R}(D)$ the set of regions of $D$.
A {\it Dehn $\Z$-coloring} of $D$ is a map $C: \mathcal{R}(D) \to \Z$ 
satisfying the following condition: 
\begin{itemize}
\item for each crossing $c$ with regions 
$x_1, x_2, x_3$, and $x_4$ 
as depicted in Figure~\ref{coloring2},
\[
C(x_1) + C(x_3) =C(x_2) + C(x_4)
\]
holds, where the region $x_2$ is adjacent to $x_1$ by an under-arc and $x_3$ is adjacent to $x_1$ by the over-arc.
\end{itemize}
We call $C(x)$ the {\it color} of a region $x$ by $C$. 
In this paper, as shown on the right-hand side of Figure~\ref{coloring2}, we represent a Dehn $\Z$-coloring $C$ of a knot diagram $D$ by assigning the color $C(x)$ to each region $x$.  
We mean by $(D,C)$ a diagram $D$ given a Dehn $\Z$-coloring $C$, and call it a {\it Dehn $\Z$-colored diagram}. 
We denote by $\mathcal{C}(D, C)$ the set of colors assigned to regions of $D$ by $C$, that is $\mathcal{C}(D, C)={\rm Im}\,C$.
The set of Dehn $\Z$-colorings of $D$ is denoted by ${\rm Col}_{\Z}(D)$.
\begin{figure}[ht]
  \begin{center}
    \includegraphics[clip,width=6cm]{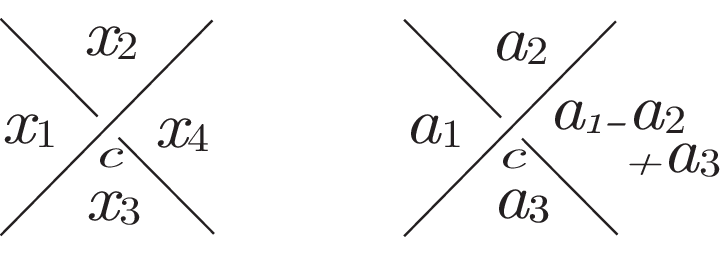}
    \caption{A crossing on $D$ and the one on $(D,C)$ with $C(x_1)=a_1$, $C(x_2)=a_2$, $C(x_3)=a_3$, $C(x_4)=a_1-a_2+a_3$}
    \label{coloring2}
  \end{center}
\end{figure}

 A Dehn $\Z$-coloring $C$ of $D$ is {\it trivial} if the following (i) or (ii) holds: (i) $\# \mathcal{C}(D, C)=1$; (ii) $\# \mathcal{C}(D, C)=2$ and $C(x_1)\not =C(x_2)$ for any two adjacent regions $x_1$ and $x_2$. 
A link $L$ is {\it Dehn $\Z$-colorable} if $L$ has a Dehn $\Z$-colored diagram $(D,C)$ such that $C$ is nontrivial. 
We note that a link $L$ is Dehn $\Z$-colorable if and only if ${\rm Det}(L)=0$, where ${\rm Det}(L)$ is an invariant of links called the {\it determinant} of $L$.

A transformation $C \mapsto C+t$ with $t \in \mathbb Z$ is called a {\it parallel transformation on ${\rm Col}_\mathbb{Z}(D)$}, where $C+t$ is a Dehn $\mathbb Z$-coloring of $D$ which maps a region $x$ to $C(x)+t$.

In this paper, we focus on the minimum number of colors.
\begin{definition} 
The {\it minimum number of Dehn $\Z$-colors} of a $\mathbb Z$-colorable link $L$ is the minimum number of distinct elements of $\Z$ which produce a nontrivially Dehn $\Z$-colored diagram of $L$, that is, 
\[
\min\Big\{\#\mathcal{C}(D,C) ~\Big|~ (D,C)\in \left\{
\begin{minipage}{4.3cm}
nontrivially Dehn $\Z$-colored\\
 diagrams of $L$
\end{minipage}
\right\}\Big\}.
\]
We denote it by $\mincol_\Z(L)$.
\end{definition}
We note that $\mincol_\Z(L)=2$ for a splittable link $L$, which is shown as follows.
A splittable link $L$ has a diagram $D=D_1 \sqcup D_2$ such that $D_1$ and $D_2$ are separated by a simple closed curve $\ell$ on $\mathbb R_2$. 
We then give the color $0$ to the regions appearing inside of $\ell$, and give the colors $0$ and $1$ to the regions appearing outside of $\ell$ so that $C(x_1)\not =C(x_2)$ for any two adjacent regions $x_1$ and $x_2$, and the unbounded region is colored by $0$. Such an assignment is a nontrivial Dehn $\mathbb Z$-coloring with two colors. Thus, $\mincol_\Z(L)=2$ for a splittable link $L$. 
Therefore, when we discuss the minimum number of Dehn $\Z$-colors, it is appropriate to restrict to ``nonsplittable'' $\mathbb Z$-colorable links.

\section{Simple Dehn $\mathbb Z$-colorings and a relationship with Fox $\mathbb Z$-colorings}

Fox colorings of link diagrams have been extensively studied for a long time. There is the correspondence, as illustrated in the upper picture of  Figure~\ref{coloring1}, from Dehn $\mathbb Z$-colorings to Fox $\mathbb Z$-colorings for a given diagram. 
In this correspondence, we distinguish colors of regions for Dehn $\Z$-colorings and those of arcs for Fox $\Z$-colorings by putting a bar over the colors of arcs, as shown in Figure~\ref{coloring1}. 
Then a trivial (resp. nontrivial) Dehn $\Z$-coloring corresponds to a trivial  (resp. nontrivial) Fox $\Z$-coloring as depicted in the lower picture of  Figure~\ref{coloring1}. We also note that a link $L$ is Dehn $\Z$-colorable if and only if $L$ is Fox $\Z$-colorable. 
Conversely, every time we fix a color assigned to the unbounded region of a given diagram, a reverse conversion from Fox $\mathbb Z$-colorings to Dehn $\mathbb Z$-colorings is given by the rule depicted in the upper picture of Figure~\ref{coloring3}. See also the lower picture of Figure~\ref{coloring3}, and the Dehn $\mathbb Z$-coloring is determined from the Fox $\mathbb Z$-coloring when we fix $0$ as the color of the unbounded region. 
\begin{figure}[ht]
  \begin{center}
    \includegraphics[clip,width=8.5cm]{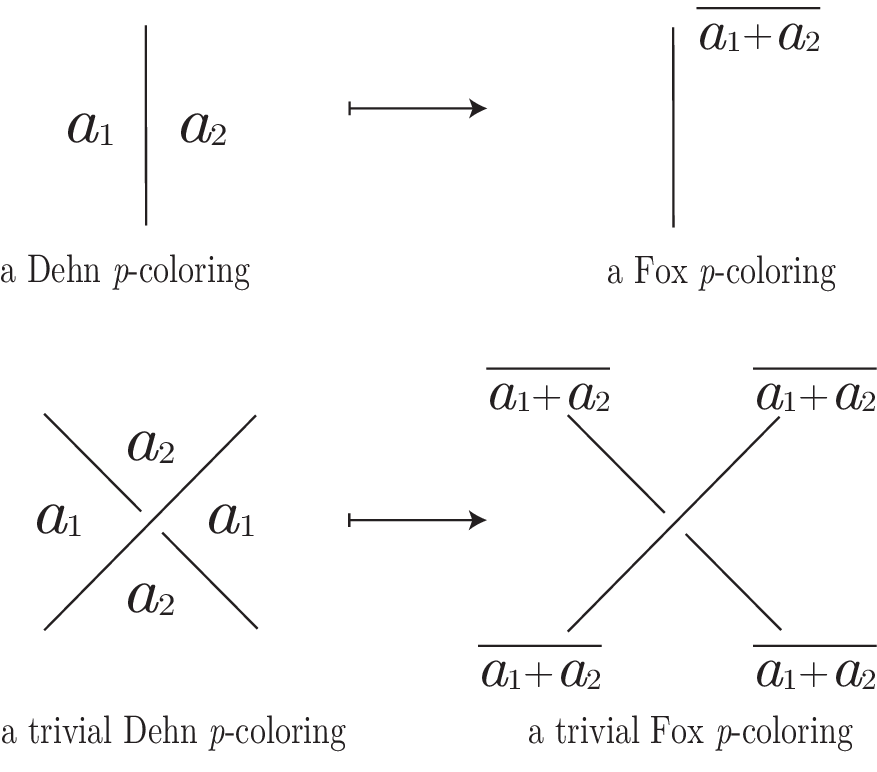}
    \caption{The conversion rule from Dehn colorings to Fox colorings}
    \label{coloring1}
  \end{center}
\end{figure}
\begin{figure}[ht]
  \begin{center}
    \includegraphics[clip,width=9cm]{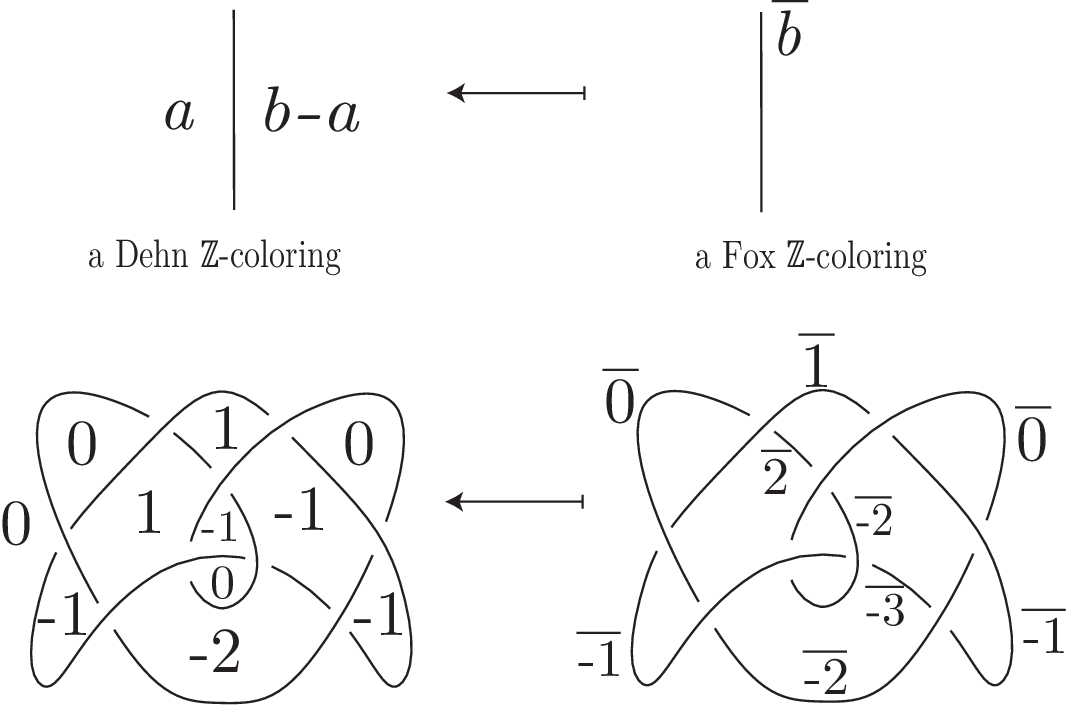}
    \caption{The conversion rule from Fox colorings to Dehn colorings}
    \label{coloring3}
  \end{center}
\end{figure}

Let $n$ be a non-negative integer.
For a crossing $c$ of a Dehn $\Z$-colored diagram $(D,C)$, we call $c$ an {\it $n$-diff crossing} if $|C(x_1)-C(x_4)|=|C(x_2)-C(x_3)|=n$, where $x_1, \ldots ,x_4$ are the regions that form $c$ as shown in Figure~\ref{coloring2}.
A Dehn $\Z$-coloring $C$ of a link diagram $D$ is {\it $n$-simple} if each crossing of $(D,C)$ is an $n$- or $0$-diff crossing, and there exists at least one $n$-diff crossing. 
We also simply say that a Dehn $\Z$-coloring $C$ is {\it simple} if it is $n$-simple for some $n$. 

On the other hand, the terms ``$n$-diff" and ``$n$-simple" were defined in the case of Fox $\Z$-colorings in \cite{ZhangJinDeng} and \cite{IchiharaMatsudo17}, respectively, as follows.
A crossing $c$ of $(D, \bar{C})$ is an {\it $n$-diff} crossing if the difference between the colors of the over-arc and an under-arc of $c$ is $n$.
A Fox $\mathbb Z$-coloring $\bar{C}$ of a diagram $D$ is {\it $n$-simple} if each crossing of $(D,\bar{C})$ is an $n$- or $0$-diff crossing, and there exists at least one $n$-diff crossing. 
For a Dehn $\mathbb Z$-colored diagram $(D,C)$,  
an $n$-diff crossing $c$ of $(D,C)$ corresponds to the $n$-diff one $c$ of the corresponding Fox $\Z$-colored diagram $(D,\bar{C})$ (see Figure~\ref{n-simple-crossings}).
An $n$-simple Dehn $\Z$-coloring $C$ of $D$ corresponds to the $n$-simple Fox $\Z$-coloring $\bar{C}$ of $D$.
\begin{figure}[ht]
  \begin{center}
    \includegraphics[clip,width=7.5cm]{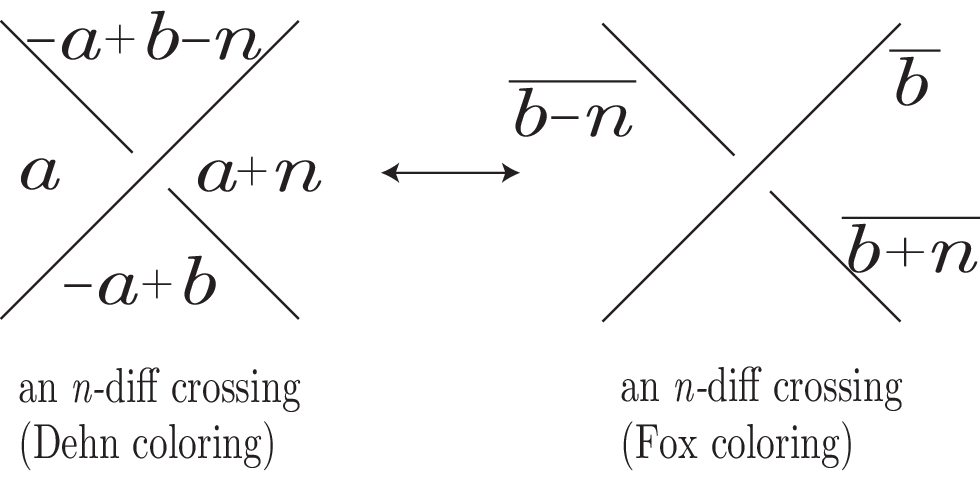}
    \caption{The correspondence for an $n$-diff crossing}
    \label{n-simple-crossings}
  \end{center}
\end{figure}

\begin{lemma}\label{lem:1-simple}
Any nonsplittable Dehn $\mathbb Z$-colorable link has a diagram admitting a $1$-simple Dehn $\mathbb Z$-coloring.
\end{lemma}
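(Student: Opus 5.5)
We derive the statement from the known Fox-coloring result and the correspondence between Dehn and Fox $\Z$-colorings described above. Let $L$ be a nonsplittable Dehn $\Z$-colorable link. Then $L$ is Fox $\Z$-colorable, and ${\rm Det}(L)=0$. By the results in \cite{IchiharaMatsudo17, ZhangJinDeng}, which are used to show that the minimum number of Fox $\Z$-colors of a nonsplittable $\Z$-colorable link is four, $L$ has a diagram $D$ with a nontrivial Fox $\Z$-coloring $\bar{C}$ that is simple. Concretely, there is some $n\ge 1$ such that every crossing is a $0$-diff or $n$-diff crossing, and at least one crossing is $n$-diff. In the construction of \cite{IchiharaMatsudo17} the colors form a small set such as $\{0,n,2n,3n\}$ (or a similar arithmetic pattern). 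This construction uses nonsplittability: it adds and slides arcs so that the colors coalesce.

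The first step is to reduce to $n=1$. If $\bar{C}$ is $n$-simple, then every color difference at a crossing is $0$ or $\pm n$. By a parallel transformation we may assume some arc has color $0$. I claim that every arc color is then a multiple of $n$. To see this, walk along the link: colors change only at under-crossings, by $\pm 2(\text{over}-\text{under})$ relative to the incoming arc, i.e.\ by $0$ or $\pm 2n$. Passing from one component to another through a crossing changes colors by a multiple of $n$. Since $L$ is nonsplittable, the diagram $D$ is connected, so this argument reaches every component. Dividing every arc color by $n$ then gives a Fox $\Z$-coloring, because the relation $2\,\text{over}=\text{under}_1+\text{under}_2$ is linear. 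The divided coloring is $1$-simple and still nontrivial. (If the cited result already gives $1$-simple colorings, this step is unnecessary.)

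The second step is to convert to a Dehn coloring. Fix $0$ as the color of the unbounded region. The reverse conversion of Figure~\ref{coloring3} gives a Dehn $\Z$-coloring $C$ of $D$ whose associated Fox coloring is $\bar{C}$. By the correspondence noted before the lemma, an $n$-diff crossing of $(D,\bar{C})$ corresponds to an $n$-diff crossing of $(D,C)$, and the nontriviality of $C$ follows from that of $\bar{C}$. Hence $C$ is a $1$-simple Dehn $\Z$-coloring.

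The main obstacle is the existence of a simple Fox coloring for every nonsplittable $\Z$-colorable link. I would quote this from \cite{IchiharaMatsudo17} rather than reprove it. If a self-contained argument is required, I would start from any nontrivial Fox $\Z$-coloring. I would use Reidemeister moves (finger moves of arcs) to replace each crossing having a large color difference by a sequence of crossings with differences in $\{0,1\}$. For this I would first normalize the coloring so that adjacent colors along the diagram differ by at most one. That normalization is the delicate part of the argument.
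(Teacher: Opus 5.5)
Your argument follows the paper's route: take a simple Fox $\Z$-coloring from the literature, convert it to a Dehn $\Z$-coloring via Figure~\ref{coloring3} with the unbounded region colored $0$, and use the fact that $0$- and $n$-diff crossings correspond. Your extra rescaling step is correct. Every diagram of a nonsplittable link is connected, and at each crossing the three arc colors are congruent mod $n$. So after a shift all colors are multiples of $n$, and dividing by $n$ preserves the linear Fox condition and turns $n$-diff crossings into $1$-diff ones. The paper doesn't need this step, because it quotes a $1$-simple result directly. In your version it makes the argument work even if the quoted result only gave an $n$-simple coloring.

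What needs fixing is the source of the key input. The statement you need is that every nonsplittable $\Z$-colorable link has a diagram with a ($1$-)simple Fox $\Z$-coloring. The paper attributes this to \cite{Matsudo}, not to \cite{IchiharaMatsudo17} or \cite{ZhangJinDeng}. \cite{IchiharaMatsudo17} is where simple colorings are introduced, not where their existence for all nonsplittable $\Z$-colorable links is proved. So, as written, your main step rests on references that the paper does not use for this claim. Your description of ``the construction of \cite{IchiharaMatsudo17}'' with colors $\{0,n,2n,3n\}$ is speculation and should be dropped.

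The fallback sketch in your last paragraph is not a proof. The normalization you call ``delicate'' is exactly the content of \cite{Matsudo}, so the argument has to rest on that citation.
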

\begin{proof}
In \cite{Matsudo}, the same property was shown for nonsplittable Fox $\mathbb Z$-colorable links, that is,
any nonsplittable Fox $\mathbb Z$-colorable link has a diagram admitting a $1$-simple Fox $\mathbb Z$-coloring.

Let $L$ be any nonsplittable Dehn $\mathbb Z$-colorable link, which is also Fox $\mathbb Z$-colorable.
By the property shown in \cite{Matsudo}, we have a diagram $D$ of $L$ and a $1$-simple Fox $\mathbb Z$-coloring $\bar{C}$ of $D$. 
Let $C$ be the Dehn $\mathbb Z$-coloring of $D$, with the color $0$ for the unbounded region of $D$, 
corresponding to $\bar{C}$ by the rule depicted in Figure~\ref{coloring3}.
Then, as shown in Figure~\ref{n-simple-crossings}, since any $1$- (or $0$-) diff crossing $c$ of $(D, \bar{C})$ corresponds to the $1$- (or $0$-) diff crossing $c$ of $(D, C)$, $C$ is $1$-simple. 
\end{proof}

\section{Results and proofs} \label{}
In this section, we prove Theorem~\ref{main_theorem}.

\begin{lemma}\label{lemma:colors>2}
For any nonsplittable Dehn $\mathbb Z$-colorable link $L$ and any nontrivially Dehn $\Z$-colored diagram $(D,C)$ of $L$, $\#\C(D,C)\geq 3$.
\end{lemma}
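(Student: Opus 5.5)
Suppose, for a contradiction, that $(D,C)$ is a nontrivially Dehn $\Z$-colored diagram of a nonsplittable link $L$ with $\#\C(D,C)\le 2$. If $\#\C(D,C)=1$, then $C$ is trivial by (i), so we may assume $\C(D,C)=\{a,b\}$ with $a\neq b$. Applying a parallel transformation and rescaling, we may even take the colors to be $0$ and $1$; only the fact that there are exactly two values matters.

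\textbf{Local analysis at a crossing.} Let $c$ be a crossing with regions $x_1,\dots,x_4$ as in Figure~\ref{coloring2}, so that $C(x_1)+C(x_3)=C(x_2)+C(x_4)$ with all four values in $\{a,b\}$. Write $k$ for the number of $b$'s among $C(x_1),C(x_3)$ and $\ell$ for the number among $C(x_2),C(x_4)$. The relation gives $ka+(2-k)b\cdot$ symmetric equality, i.e.\ $k=\ell$, since $a\neq b$. Hence the multiset $\{C(x_1),C(x_3)\}$ equals the multiset $\{C(x_2),C(x_4)\}$. Recall that $x_1,x_3$ are diagonally opposite, as are $x_2,x_4$. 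It follows that at each crossing one of the following holds: all four regions have the same color; the coloring is checkerboard around $c$; or one diagonal pair is $\{a,b\}$ and the other pair is also $\{a,b\}$, which means two adjacent regions across one of the arcs share a color. In particular, across each of the two strands at $c$, the colors on the two sides either both change or both stay the same, consistently along the crossing.

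\textbf{Global step.} Call an edge of $D$ (a segment between crossings) \emph{monochromatic} if the two regions it separates have the same color. The goal is to show that monochromatic edges propagate along a whole component of $L$ through crossings. At a crossing, the local analysis should show that an edge of a strand is monochromatic if and only if the continuing edge of the same strand is monochromatic. This I expect to be the main obstacle, because it requires carefully tracking which regions lie on which side of each strand in the three local patterns. Granting it, the components of $L$ split into those whose edges are all monochromatic (type M) and those whose edges are all bichromatic (type B).

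\textbf{Conclusion.} Since $C$ is nontrivial with two colors, condition (ii) fails, so some edge is monochromatic and type M is nonempty. If every component were of type M, then every edge would be monochromatic, so $C$ would be constant and have a single color, a contradiction. Hence type B is also nonempty. Now erase the type M strands. The coloring is then constant across those strands, so it descends to a checkerboard pattern relative to the type B sublink. A crossing between a type M strand and a type B strand must then have all four colors pattern-compatible. I would check that this forces the type M strand to be over at every such crossing, and then use the two-valued coloring to show that the type M and type B parts can be separated, so that $L$ is splittable, which is a contradiction.

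An alternative route, which I would try first if the above gets messy, passes to the corresponding Fox $\Z$-coloring. A two-color Dehn coloring gives arc colors in $\{0,\pm(b-a)\}$ up to the choice of base, and the relation $2\bar{C}(\text{over})=\bar{C}(u_1)+\bar{C}(u_2)$ then forces a strong constraint. One would then invoke the known fact that a nonsplittable link needs at least four Fox $\Z$-colors, for which one needs $\#\bar{\C}\le 3$ to arise from $\#\C\le 2$.
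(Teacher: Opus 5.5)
\textbf{Verdict: genuine but repairable gap.} Your main route rests on a misreading of the crossing labels, and the over/under check you planned would fail as stated. Your fallback route is the paper's proof.

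\textbf{The misreading and its consequences.} In the paper's convention $x_2$ lies across an under-arc from $x_1$, and $x_3$ lies across the over-arc. So the diagonal pairs are $\{x_1,x_4\}$ and $\{x_2,x_3\}$, not $\{x_1,x_3\}$ and $\{x_2,x_4\}$; this is why the $n$-diff condition compares $x_1$ with $x_4$. The relation says that the two regions flanking the over-arc on one side of the under-strand have the same sum as the two on the other side.
\begin{itemize}
\item Under your reading, the checkerboard pattern would violate the relation, so the trivial colorings of type (ii) would not even be colorings. This is why your list of local patterns is inconsistent.
\item With the correct reading, your multiset identity $\{C(x_1),C(x_3)\}=\{C(x_2),C(x_4)\}$ leaves exactly three patterns: constant, checkerboard, and $C(x_1)=C(x_2)\neq C(x_3)=C(x_4)$.
\item In the last pattern the over-arc separates different colors on both sides, while both under-arcs separate equal colors. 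The reverse pattern (over-arc monochromatic, under-arcs bichromatic) is excluded because $2a\neq 2b$.
\item This settles at once the propagation step you flagged as the main obstacle: along each strand, the two edges at a crossing are both monochromatic or both bichromatic.
\item It also shows your planned check fails. At a crossing between a type M strand and a type B strand, the type M strand is forced to be \emph{under}, not over.
\end{itemize}

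\textbf{How to repair it.} The argument survives, because only the uniformity of the over/under relation at mixed crossings matters. Every crossing between the two nonempty sublinks has the type B strand on top. So you can place the type M sublink below a horizontal plane and the type B sublink above it, and $L$ splits. No further use of the two-valued coloring is needed for this separation. Repaired this way, your main route is a self-contained elementary proof.

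\textbf{Comparison with the paper.} The paper takes your fallback route.
\begin{itemize}
\item After a parallel transformation the colors are $0$ and $a$.
\item The associated Fox coloring assigns each arc the sum of the colors of the two regions it separates. Its values therefore lie in $\{\bar 0,\bar a,\overline{2a}\}$, not $\{0,\pm(b-a)\}$.
\item This Fox coloring is nontrivial because $C$ is, and it uses at most three colors.
\item That contradicts the cited theorem that a nonsplittable $\Z$-colorable link needs at least four Fox $\Z$-colors.
\end{itemize}
The paper's route is shorter, but it outsources exactly the topological content that your corrected argument proves directly.
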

\begin{proof}
Let $L$ be a nonsplittable Dehn $\mathbb{Z}$-colorable link, and let $(D, C)$ be a nontrivially Dehn $\mathbb{Z}$-colored diagram of $L$. Suppose, for the sake of contradiction, that $\#C(D, C) < 3$.
Since the coloring $C$ is nontrivial, $C$ must use at least two distinct colors. Hence, we may assume that $\#C(D, C) = 2$.
By applying a parallel transformation
if necessary, we can assume without loss of generality that the two colors are $0$ and $a$ for some non-zero integer $a \in \mathbb{Z}$.
Then, the coloring patterns of adjacent regions are as depicted in Figure~\ref{colors-than3}.
This implies that the corresponding Fox $\mathbb Z$-colored diagram $(D, \bar{C})$ is colored with at most three colors, namely $\bar{0}$, $\bar{a}$ and $\overline{2a}$, as shown in Figure~\ref{colors-than3}. 
However, this contradicts the result of \cite{IchiharaMatsudo16}, which states that the minimum number of Fox $\mathbb Z$-colors of $L$ is at least four. Consequently, we conclude that $\#C(D,C)\geq 3$.

\begin{figure}[ht]
  \begin{center}    \includegraphics[clip,width=6cm]{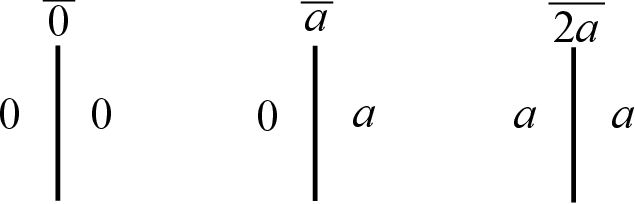}
    \caption{Coloring patterns for adjacent regions}
    \label{colors-than3}
  \end{center}
\end{figure}

\end{proof}

\begin{lemma}\label{lemma:colors=3}
For any nonsplittable Dehn $\mathbb Z$-colorable link $L$, there exists a nontrivially Dehn $\Z$-colored diagram $(D,C)$ of $L$ with  $\#\C(D,C)\leq 3$.
\end{lemma}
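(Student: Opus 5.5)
The plan is to start from the diagram supplied by Lemma~\ref{lem:1-simple} and reduce its colors to three by Reidemeister moves, carrying the coloring along. Let $L$ be a nonsplittable Dehn $\Z$-colorable link. By Lemma~\ref{lem:1-simple}, $L$ has a diagram $D$ with a $1$-simple Dehn $\Z$-coloring $C$. Every such coloring is nontrivial, since it has a $1$-diff crossing and hence is not of type (i) or (ii). By a parallel transformation we may assume $\min \mathcal{C}(D,C)=0$.

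\emph{Step 1: local structure.} I will first record what $1$-simplicity forces near a crossing. At a $0$-diff crossing we have $C(x_1)=C(x_4)$ and $C(x_2)=C(x_3)$. At a $1$-diff crossing the four colors are of the form $a,\,b,\,b\pm1,\,a\pm1$ in the cyclic order of Figure~\ref{coloring2}. Through the Dehn--Fox correspondence (Figures~\ref{coloring1} and \ref{coloring3}), this should give a bound of the form $|C(x)-C(y)|\le N$ for regions $x,y$ adjacent across an arc. Here $N$ depends only on the Fox colors, which in the $1$-simple situation lie in a set such as $\{\bar 0,\bar 1\}$ up to the $0$-diff arcs. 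Concretely, I expect adjacent regions to differ by $\pm$ a fixed quantity determined by a Fox color. This should let me view $C$ as a ``height function'' on regions that changes in controlled steps.

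\emph{Step 2: reduce the range of colors.} Let $M=\max\mathcal{C}(D,C)$. If $M\ge 3$, I will eliminate all regions of color $M$ (or symmetrically of color $0$) by local modifications that preserve the link type. For each region $R$ with $C(R)=M$, pick an arc on $\partial R$ whose neighboring region has a smaller color. Then perform a finger move: a Reidemeister~II move, possibly followed by Reidemeister~III moves, that pushes a suitable arc through $R$ and subdivides $R$. The coloring extends uniquely to the new diagram, by the conversion rule of Figure~\ref{coloring3} applied to the induced Fox coloring. I will choose the over/under data of the pushed arc so that every new region receives a color in $[0,M-1]$. This way the number of regions colored $M$ strictly decreases, and no new color outside $[0,M]$ appears. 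Nontriviality must also be kept, e.g.\ by never touching a fixed $1$-diff crossing. Inducting on $M$ and on the number of regions of color $M$ then ends with $\mathcal{C}(D',C')\subseteq\{0,1,2\}$, which proves the lemma.

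\emph{Main obstacle.} The hard step is Step~2: I must show that a finger move can always be chosen so that the new regions avoid the colors $M$ and $M+1$ (and also $-1$). The danger is that pushing an arc with a large Fox label through $R$ creates a region colored $C(R)$ plus that label. My first attempt will be to always push an arc whose Fox color, in the correspondence, equals the difference of the neighboring region colors, so that the new region inherits a color already present on the other side. I will check each of the finitely many local color patterns allowed by $1$-simplicity (the $0$-diff and $1$-diff patterns of Figure~\ref{n-simple-crossings}). If that fails, I will instead attack the Fox side directly. There I would modify the $1$-simple Fox coloring of \cite{Matsudo}, which uses colors close to $\{\bar0,\bar1,\bar2,\bar3\}$, into one whose associated Dehn coloring visibly takes only three values. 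Lemma~\ref{lemma:colors>2} then confirms that three cannot be improved.
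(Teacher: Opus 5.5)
Your outline has the same architecture as the paper's proof: start from a $1$-simple coloring, shift the minimum to $0$, eliminate every region of the maximal color $M\ge 3$ by Reidemeister moves without creating colors outside $\{0,\dots,M-1\}$, and induct on $M$. However, the step that carries the entire content of the lemma is left as your ``main obstacle,'' and the mechanism you sketch for it does not work.

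A finger move that pushes an arc across $R$ only subdivides $R$. Each piece still contains a part of $R$ lying outside the disk where the move happens, so it keeps the color $M$. The number of $M$-colored regions therefore goes up, not down. In addition, the bigon at the tip of the finger receives the color $M\pm(s-t)$, where $s,t$ are the colors beyond the two edges involved, and this is again $M$ when $s=t$. To get rid of an $M$-region you must collapse it completely, which requires controlling its shape and surroundings. There are also not ``finitely many local patterns'' a priori: $M$-regions may share crossings with other $M$-regions, and they may be $n$-gons with $n$ unbounded.

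This missing analysis is exactly what the paper supplies. It first removes crossings at which four regions, or two (adjacent or diagonally opposite) regions, are colored $M$, so that every $M$-region becomes isolated. Then $1$-simplicity forces the region diagonally opposite each corner to have color $M-1$. Hence the edge-neighbors alternate by $\pm 1$ around the region, and it is an $n$-gon with $n$ even. Such $n$-gons with $n>4$ are split into a $4$-gon and an $(n-2)$-gon. The remaining $4$-gons (three configurations, one of them under the assumption that an auxiliary color $a\neq 0$) and $2$-gons are removed by explicit moves, and the hypothesis $M\ge 3$ guarantees that no negative color appears.

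Two further remarks. First, your Step~1 is false as stated. $1$-simplicity bounds only diagonal differences: a $0$-diff crossing can have $C(x_1)=C(x_4)=0$ and $C(x_2)=C(x_3)=5$, with $x_1,x_2$ adjacent. Also, the Fox colors of a $1$-simple coloring of a nonsplittable link cannot lie in $\{\bar 0,\bar 1\}$, since at least four Fox colors are needed. Second, nontriviality needs no special device. Reidemeister moves carry nontrivial colorings to nontrivial ones, and the paper keeps every intermediate coloring $1$-simple. By contrast, freezing a fixed $1$-diff crossing may be incompatible with removing color $M$, since that crossing may itself have an $M$-colored region.
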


\begin{proof}
Let $(D,C)$ be a nontrivially Dehn $\Z$-colored diagram of $L$ such that $C$ is $1$-simple (see Lamme~\ref{lem:1-simple}). 
Here, by applying a parallel transformation, we may assume that the minimum color of the Dehn $\mathbb{Z}$-coloring $C$ is $0$.
Let $M$ be the maximum color on $(D,C)$, and we assume that $M\geq 3$. 
We first construct a nontrivially Dehn $\mathbb{Z}$-colored diagram $(D', C')$ with $\mathcal{C}(D',C') \subset\{0, \ldots , M-1\}$ by removing all regions colored $M$ from $(D, C)$ according to the following procedure.
\begin{itemize}
    \item(Step 1) Remove crossings colored with four $M$s.
    \item(Step 2) Remove crossings colored with two $M$s.
    \item(Step 3) Remove $n$-gons colored $M$.
\end{itemize}

In this proof, suppose that $a$ is a color with $0\leq a<M$, and additional conditions may be imposed on $a$ depending on the situation.

(Step 1) 
First, we remove crossings whose four adjacent regions are all colored $M$ by the move depicted in Figure~\ref{onlyM}.
We note that the resulting $\Z$-coloring is $1$-simple.

\begin{figure}[ht]
    \centering
    \includegraphics[width=0.9\linewidth]{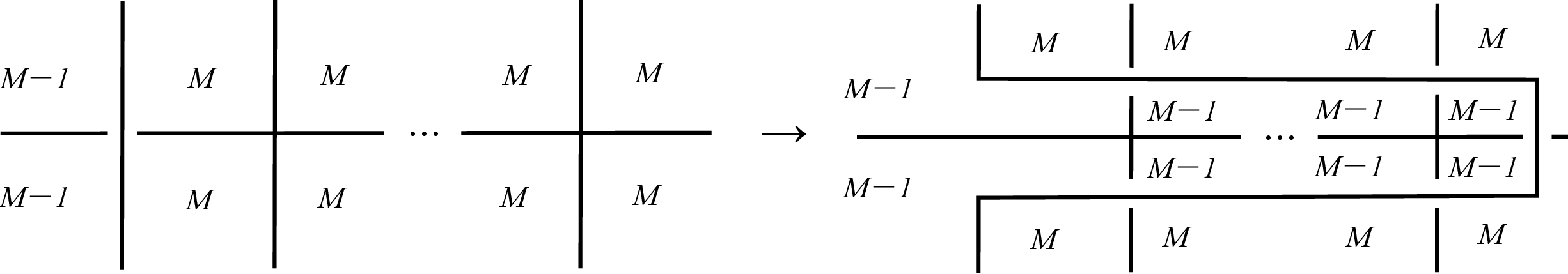}
    \caption{Removal of a crossing colored only $M$}
    \label{onlyM}
\end{figure}

(Step 2)
After Step 1, there are no crossings whose four adjacent regions are all colored $M$.
We now focus on crossings that have exactly two regions colored $M$.
Such crossings fall into two cases: either the two regions colored $M$ are adjacent, or they are diagonally opposite.
First, in the case where the regions colored $M$ are adjacent, we eliminate such crossings by applying the deformation depicted in Figure~\ref{2Ms-cons.}.
\begin{figure}[ht]
    \centering
    \includegraphics[width=0.6\linewidth]{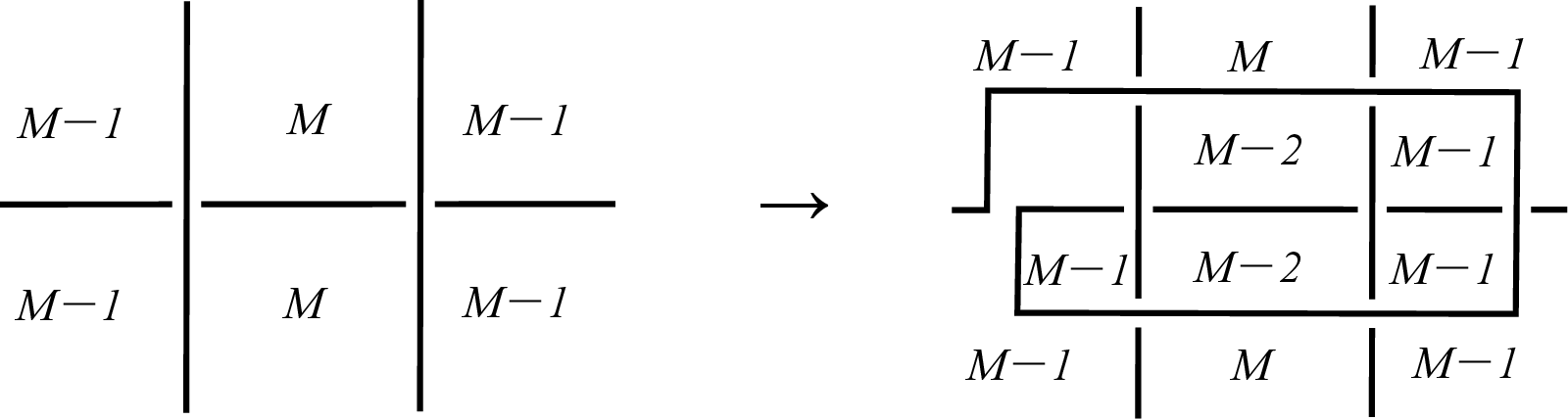}
    \caption{Removal of an arc between two regions colored $M$}
    \label{2Ms-cons.}
\end{figure}

Next, in the case where $M$ appears in diagonally opposite regions, we eliminate such crossings by applying the deformations shown in Figures~\ref{2Ms-opp.1} and \ref{2Ms-opp.2}, depending on the surrounding local configuration.

\begin{figure}[ht]
    \centering
 \includegraphics[width=0.7\linewidth]{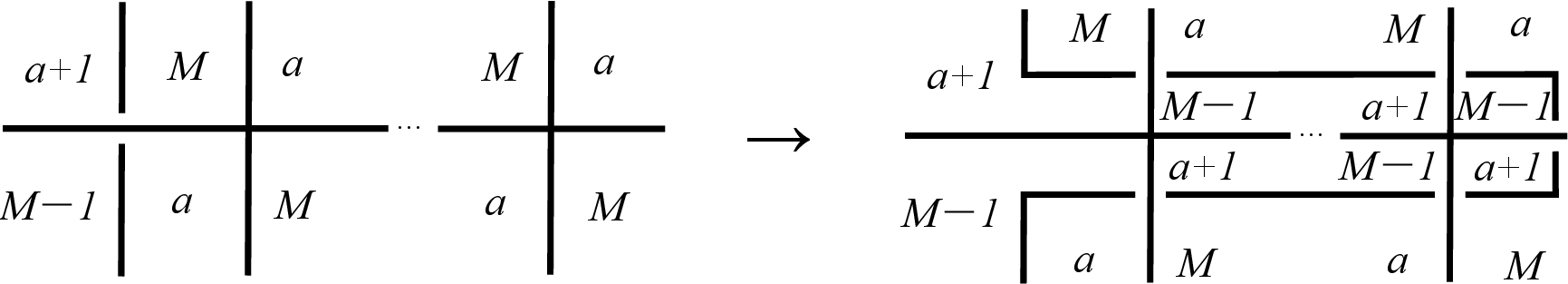}
    \caption{Removal of a crossing with $M$ in diagonally opposite regions}
    \label{2Ms-opp.1}
\end{figure}

\begin{figure}[ht]
    \centering    \includegraphics[width=0.7\linewidth]{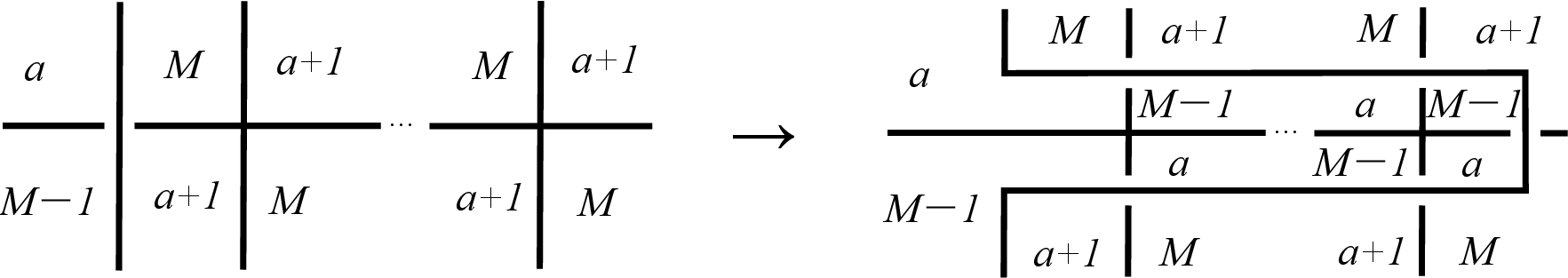}
    \caption{Removal of a crossing with $M$ in diagonally opposite regions}
    \label{2Ms-opp.2}
\end{figure}

We note that the resulting $\Z$-coloring is $1$-simple.

(Step 3)
We eliminate $n$-gons colored $M$ as follows. Due to the 1-simple Dehn $\mathbb{Z}$-coloring condition, $n$ must be an even number. Furthermore, according to Steps 1 and 2, no other regions colored $M$ occur in the vicinity of an $n$-gon colored $M$.
First, suppose that $n>4$. Then any $n$-gon colored $M$ admits a local configuration as illustrated on the left-hand side of Figures~\ref{into4gon-4}, \ref{into4gon-1}, or \ref{into4gon-2}. For each case, the $n$-gon can be separated into a $4$-gon  and $(n-2)$-gon via the transformations illustrated in Figures~\ref{into4gon-4}, \ref{into4gon-1}, or \ref{into4gon-2}.

\begin{figure}[ht]
    \centering    \includegraphics[width=0.7\linewidth]{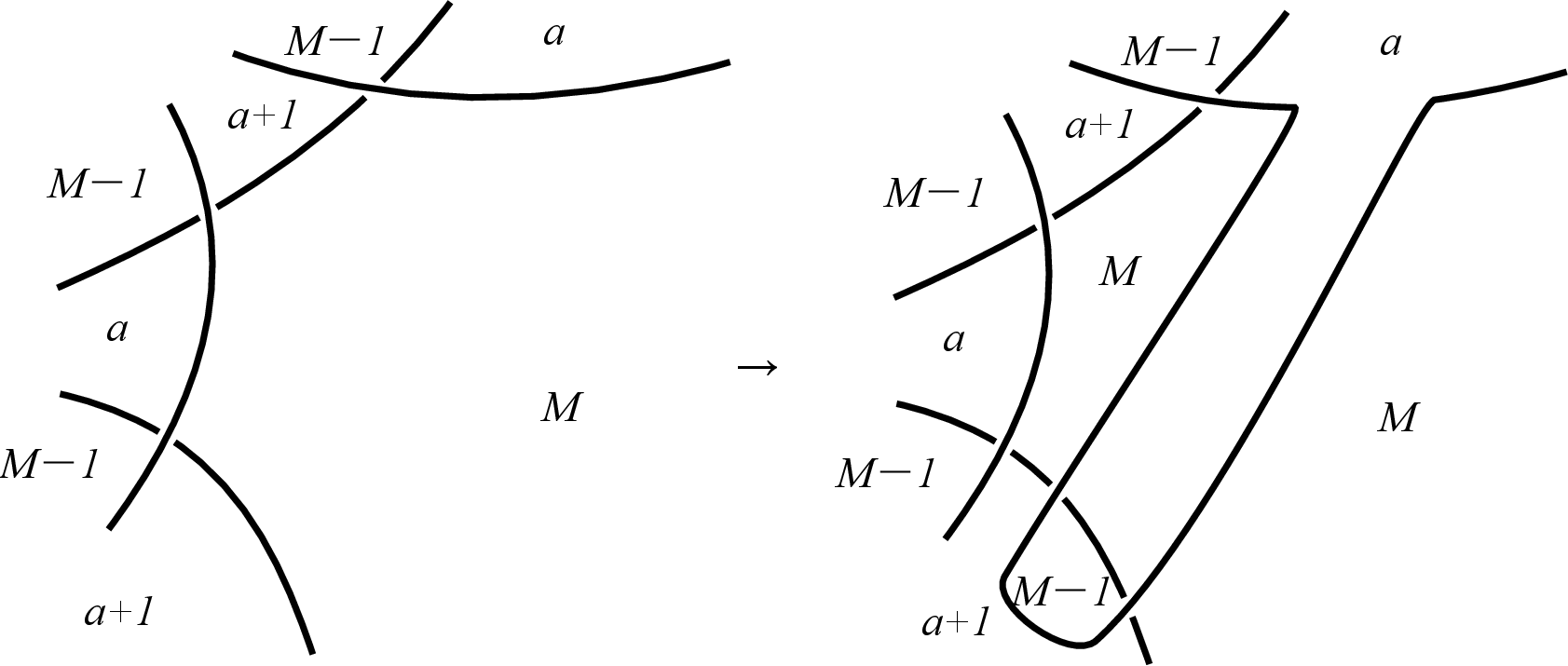}
    \caption{Decomposition of an $n$-gon into a 4-gon and an $(n-2)$-gon}
    \label{into4gon-4}
\end{figure}
\begin{figure}[ht]
    \centering    \includegraphics[width=0.7\linewidth]{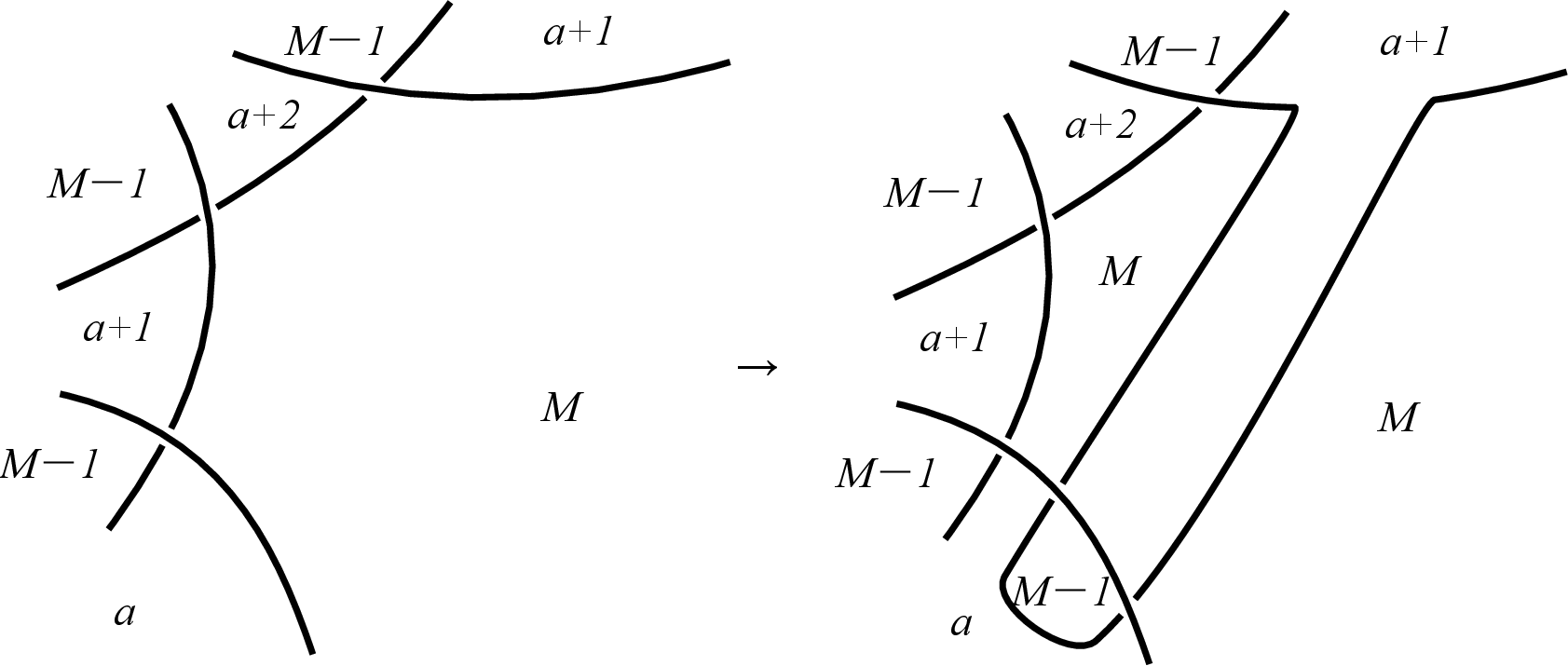}
    \caption{Decomposition of an $n$-gon into a 4-gon and an $(n-2)$-gon}
    \label{into4gon-1}
\end{figure}
\begin{figure}[ht]
    \centering    \includegraphics[width=0.7\linewidth]{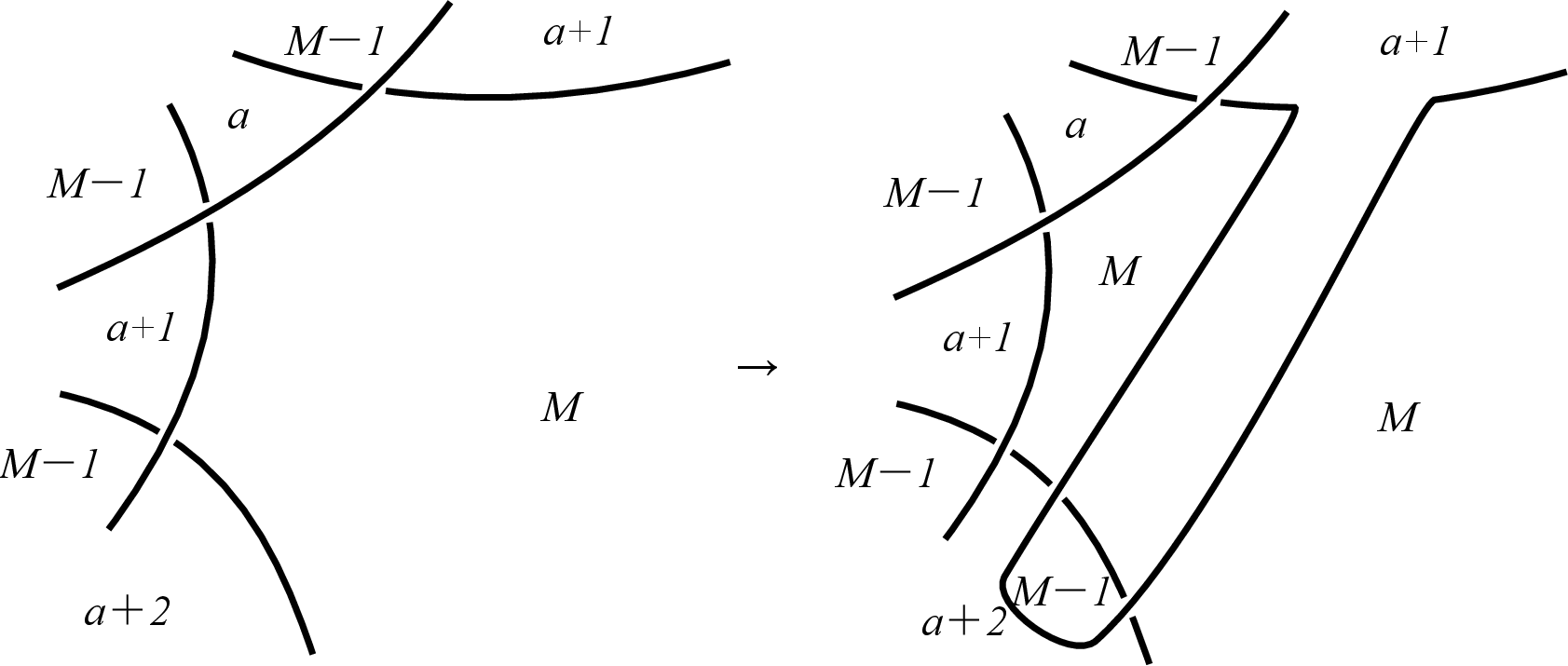}
    \caption{Decomposition of an $n$-gon into a 4-gon and an $(n-2)$-gon}
    \label{into4gon-2}
\end{figure}
By repeating this operation, all $n$-gons colored $M$ can be separated into 4-gons, and the remaining regions colored $M$ are 4-gons and $2$-gons which have no other regions colored $M$ in their vicinity. 
We note that the resulting $\Z$-coloring is $1$-simple.

Next, we eliminate the remaining $4$-gons colored $M$.
Such 4-gons are reduced to the cases shown on the left-hand sides of Figures \ref{4gon-case1-1}, \ref{4gon-case1-2} and \ref{4gon-case2}.
Note that for the case depicted in Figures \ref{4gon-case1-1}, we assume that $a\not= 0$.  
For each case, the $4$-gon colored $M$ can be eliminated by applying the transformation shown in the corresponding figure.
\begin{figure}[ht]
    \centering    \includegraphics[width=0.7\linewidth]{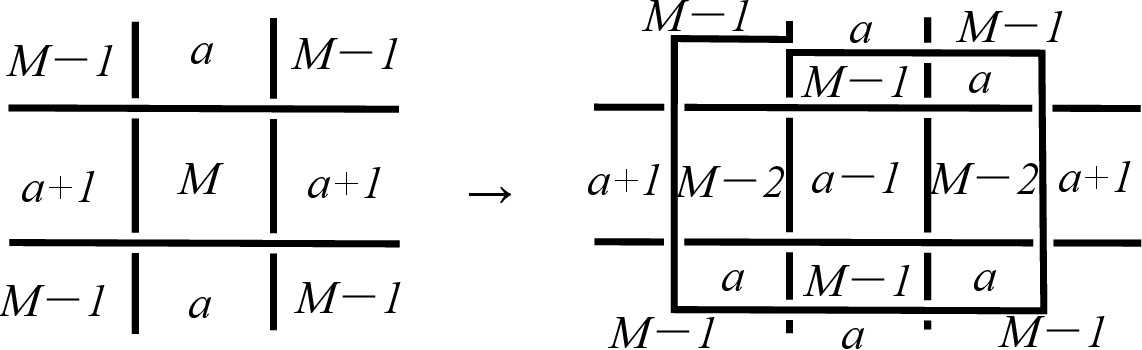}
    \caption{Removal of a $4$-gon colored with $M$ ($a\not= 0$)}
    \label{4gon-case1-1}
\end{figure}
\begin{figure}[ht]
    \centering    \includegraphics[width=0.7\linewidth]{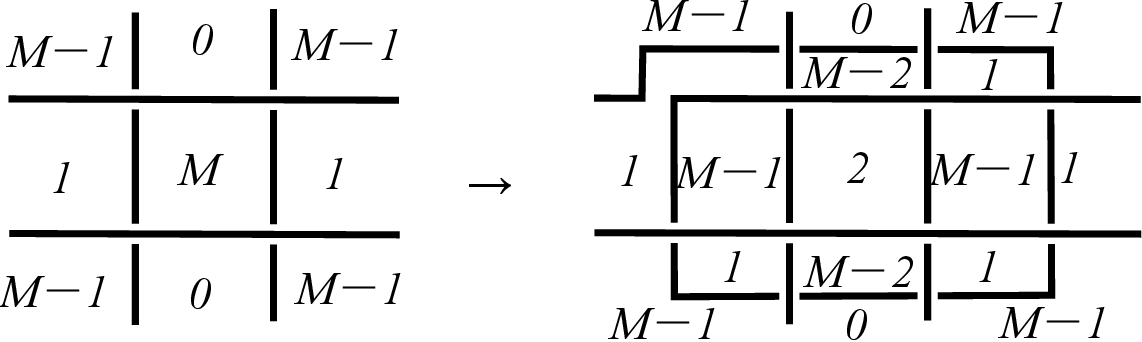}
    \caption{Removal of a $4$-gon colored with $M$}
    \label{4gon-case1-2}
\end{figure}
\begin{figure}[ht]
    \centering    \includegraphics[width=0.7\linewidth]{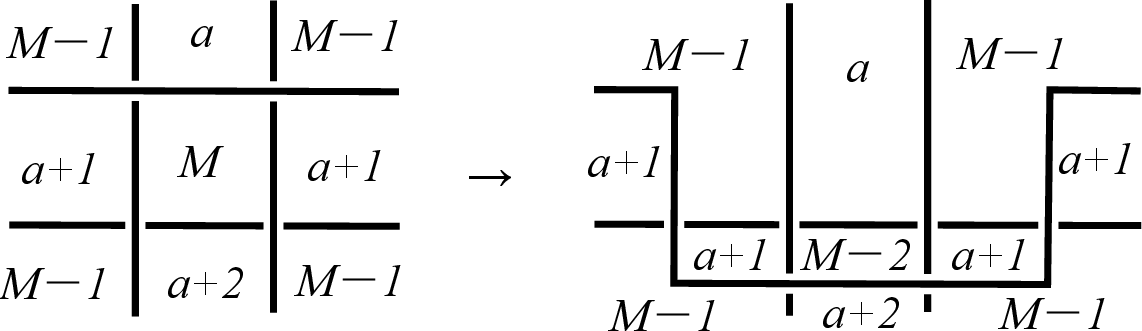}
    \caption{Removal of a $4$-gon colored with $M$}
    \label{4gon-case2}
\end{figure}

Lastly, we eliminate the remaining $2$-gons colored $M$.
Such $2$-gons are shown on the left-hand side of Figure~\ref{2gon}, and they can be eliminated by applying the move depicted in Figure~\ref{2gon}.

\begin{figure}[ht]
    \centering    \includegraphics[width=0.5\linewidth]{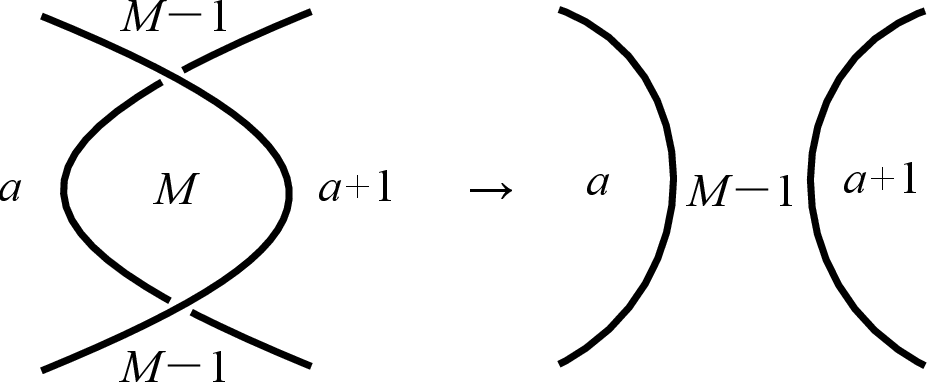}
    \caption{Removal of a $2$-gon colored with $M$}
    \label{2gon}
\end{figure}

We note that the resulting $\Z$-coloring is $1$-simple.

After carrying out all of the steps above, no region colored $M$ remains in the diagram, and for the resulting colored diagram $(D',C')$, we have $\C(D',C') \subset \{0,1,\ldots , M-1\}$. 
We note that, under the condition $M\geq 3$, no negative integers arise in any of the above transformations.

We again denote the newly obtained $(D',C')$ by $(D,C)$, and apply a parallel transformation for $(D,C)$, if needed, so that its minimum color is 0.
Repeat the above three steps for $(D,C)$ while the maximum color is greater than $2$. 

Consequently, we have a nontrivially Dehn $\mathbb Z$-colored diagram $(D,C)$ of $L$ with $\C(D,C)\subset \{0,1,2\}$, i.e. $\#\C(D,C)\leq 3$. 
\end{proof}

\begin{proof}[Proof of Theorem~\ref{main_theorem}.]
This follows from Lemmas~\ref{lemma:colors>2} and \ref{lemma:colors=3}.
\end{proof}

\section*{Acknowledgments}
The second author was supported by JSPS KAKENHI Grant Number JP26K06788.

\end{document}